\newcommand{\rb}{\raisebox}
\newcommand{\ig}{\includegraphics}
\newcommand\risS[6]{\rb{#1pt}[#5pt][#6pt]{\begin{picture}(#4,15)(0,0)
  \put(0,0){\ig[width=#4pt]{#2.eps}} #3
     \end{picture}}}
\newtheorem{thm}{Theorem}[section]
\newtheorem{defn}[subsection]{Definition}
\newtheorem{lemma}[subsection]{Lemma}
\newtheorem{rem}[subsection]{Remark}
\def\st{\rb{-3pt}{\Huge $*$}}
\begin{document}

\title[A generalization of Vassiliev's planarity criterion]{A generalization of Vassiliev's planarity criterion}
\author{TYLER FRIESEN}
\subjclass[2010]{05C10, 57M15, 57M25, 57M27}
\date{}

\address{
Department of Mathematics, The Ohio State University,
231 West 18th Avenue, Columbus, OH 43210
{\tt friesen.15@buckeyemail.osu.edu}
}

\keywords{$X$-graphs, $X$-embedding, planarity, \st-graphs, \st-embedding.}

\begin{abstract}
Motivated by his studies in knot theory V.~Vassiliev \cite{Va} introduced $X$-graphs as regular 4-valent graph with a structure of pairs of opposite edges at each vertex. He conjectured the conditions under which $X$-graph can be embedded into a plane respecting the the $X$-structure at every vertex. The conjecture was proved by V.Manturov \cite{Man}. Here we generalize these results for graphs with vertices of valency 4 or 6, \st-graphs. A problem of such generalization was posted in \cite{Sko}.
\end{abstract}

\maketitle

\section*{Introduction} \label{s:intro}

Adopting the terminology of \cite{Sko}, an {\it $X$-graph} is a regular 4-valent graph together with a splitting of the 4 half-edges at each vertex into two pairs. A choice of such splitting will be called {\it crossing structure} or {\it $X$-structure} and the half-edges at vertex in the same pair will be called {\it opposite}.
An embedding (or immersion) of a 4-valent graph into a surface induces the crossing structure at every vertex when the opposite half-edges are embedded as physically opposite lines at the vertex. Thus an embedded 4-valent graph naturally becomes an $X$-graph. An {\it $X$-embedding} of an $X$-graph into a surface is an embedding of the graph into a surface such that the induced crossing structure coincides with the given $X$-structure on the $X$-graph.
Here is an example.
$$\parbox{2in}{\tt $X$-structure:\\ four half-edges $A_1,A_2,B_1,B_2$ at the vertex are split into pairs $(A_1,A_2)$ and $(B_1,B_2)$.}\hspace{1cm}
\risS{-20}{cros}{\put(-3,41){$A_1$}\put(40,41){$B_1$}
                 \put(-10,-6){$B_2$}\put(40,-5){$A_2$}
                 \put(-10,-20){\tt $X$-embedding}}{40}{0}{0}\hspace{3cm}
\risS{-20}{cros1}{\put(-3,41){$A_1$}\put(40,41){$A_2$}
                 \put(-10,-6){$B_2$}\put(40,-5){$B_1$}
                 \put(-18,-20){\tt non $X$-embedding}}{40}{35}{45}
$$
From now on in figures we always assume that the crossing structure at a vertex is induced from the plane of a picture.

\medskip
{\bf Vassiliev's conjecture \cite{Va} (Manturov's theorem \cite{Man}.)}
{\it An $X$-graph is $X$-planar if and only if it does not contain two cycles
without common edges and with exactly one crossing vertex. A crossing vertex is a vertex which belongs to both cycles and is passed by each cycle according the crossing structure.}\\
Note that it might be other common vertices where the two cycles make a turn and do not go through along the crossing structure. 

In an expository note \cite{Sko} A.~Skopenkov posted a problem (Problem 2) of defining
\st-graphs and generalizing Manturov's theorem to them. This paper suggests an answer.

Informally, a \st-graph is a graph with a cyclic order of half-edges at each vertex considered up to reversing the order. For vertices of valency 4, this structure is equivalent to the crossing structure. An embedding of a graph into a surface induces the \st-structure from one of two possible local orientations of the surface around the vertex. This allows us to speak about \st-embeddings od \st-graphs. 
Again on figures we assume that the \st-structure is given by a counterclockwise (clockwise) cyclic order.
The formal definitions are given in Section \ref{s:def}. The formulation of our main theorem essentially repeated the Manturov's theorem. It is given in Section \ref{s:mth}. Section \ref{s:al} is devoted to auxiliary lemmas needed for the proof.

\bigskip
This work has been done as a part of the Summer 2012 undergraduate research working group
\begin{center}\verb#http://www.math.ohio-state.edu/~chmutov/wor-gr-su12/wor-gr.htm#
\end{center}
``Knots and Graphs" 
at the Ohio State University. The author is grateful to all participants of the group for valuable discussions and to the OSU Honors Program Research Fund for the financial support. Special thanks go to Sergei Chmutov, the supervisor of the group, whose generous support and guidance made this paper possible.

\section{Definitions} \label{s:def}

We use the standard graph theoretical notations from \cite{B,MT}.

\begin{defn}\rm
An \emph{unoriented cyclic order} on a finite set $S$ of cardinality $n$ is a bijection $X:S\to V(C_n)$, where $C_n$ is the cycle graph with $n$ vertices.
\end{defn}

\begin{defn}\rm
A \emph{\st-graph} is an abstract graph together with an unoriented cyclic order $X_a$ on the half-edges emanating from each vertex $a$. The unoriented cyclic order $X_a$ is called the {\it \st-structure} at $a$.
\end{defn}

A \st-structure is weaker than a usual oriented cyclic order of half-edges which is called the {\it rotation system} in \cite{MT} corresponding to embeddings of the graph into an oriented surface. On the other hand a \st-structure is stronger than an $X$-structure of opposite edges. For example, for a 6-valent vertex $a$, the \st-structure determines not only the opposite half-edge to a given half-edge as a half-edge mapped to the opposite vertex of the hexagon $C_6$, but also a pair of neighboring adjacent half-edges and a pair of remaining half-edges at ``distance" 2 in $C_6$. For a 4-valent vertex the \st-structure coincides with the $X$-structure.

\begin{defn}\rm
A \emph{\st-embedding} of a \st-graph into a surface (not necessarily orientable) is an embedding of the graph in the usual sense, with the following additional constraint: If two half-edges around a vertex map to adjacent points in the cycle graph, they must be adjacent on the surface, i.e. they must belong to the same face.
\end{defn}

Here is an example of a \st-embedding and non \st-embedding of a 6-vertex with the \st-structure given by the cyclic order $(A,B,C,D,E,F)$ into a plane.
$$\risS{-20}{v6-1}{\put(-3,35){$A$}\put(12,43){$B$}\put(35,35){$C$}
                 \put(-5,7){$F$}\put(20,-8){$E$}\put(40,10){$D$}
                 \put(-10,-20){\tt \st-embedding}}{40}{0}{0}\hspace{4cm}
\risS{-20}{v6-2}{\put(0,40){$A$}\put(28,43){$E$}\put(40,17){$C$}
                 \put(-8,17){$F$}\put(10,-8){$B$}\put(30,-3){$D$}
                 \put(-18,-20){\tt non \st-embedding}}{40}{33}{47}
$$

\begin{defn}\rm
A \emph{crossing} between two edgewise disjoint cycles $A$ and $B$ is a common vertex $a$ which is passed by the cycle $A$ along the edges $(A_1,A_2)$ and by the cylce $B$ along the edges $(B_1,B_2)$ in such a way that the vertices $X_a(A_1)$, $X_a(A_2)$, $X_a(B_1)$, and $X_a(B_2)$ are alternate in the corresponding cycle graph $C_n$, that is they appear in the cyclic order 
$(X_a(A_1), X_a(B_1), X_a(A_2), X_a(B_2))$ in $C_n$.
%
\end{defn}

\begin{defn}\rm
A \emph{Vassiliev obstruct} is a pair of edgewise disjoint cycles with exactly one crossing.
\end{defn}

\begin{defn}\rm
The \emph{expansion} of a 6-vertex in a \st-graph is three 4-vertices with $X$-structures, arranged in either of the two ways shown.
$$\risS{-20}{v6-1}{}{40}{0}{0}
\quad\risS{-5}{tor}{}{30}{0}{0}\quad
\risS{-28}{v4-3}{}{50}{33}{47}\hspace{1.5cm}\mbox{\tt or}\hspace{1.5cm}
\risS{-20}{v6-1}{}{40}{0}{0}
\quad\risS{-5}{tor}{}{30}{0}{0}\quad
\risS{-28}{v4-3a}{}{50}{33}{47}
$$
Remember that \st-structure here is induced from the plane of the figure.

The \emph{expansion} of a \st-graph in which all vertices have order 4 or 6 is the $X$-graph generated by replacing each 6-vertex with its expansion. Note that the expansion of a graph is not uniquely defined. However, our results depend only on the existence of an expansion for any \st-graph in which all vertices have order 4 or 6, not its uniqueness.
\end{defn}

\section{Main result} \label{s:mth}

\begin{thm} 
A \st-graph in which each vertex is of order 4 or 6 has a \st-embedding into the plane if and only if it does not contain a Vassiliev obstruct.
\end{thm}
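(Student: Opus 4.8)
The plan is to reduce the ★-graph theorem to Manturov's theorem (stated as Vassiliev's conjecture in the excerpt) by using the expansion operation. The excerpt already defines the expansion of a ★-graph with vertices of order 4 or 6 as an X-graph, and emphasizes that only existence matters. So the strategy has three logical pieces: (1) the ★-graph has a ★-embedding in the plane iff its expansion has an X-embedding in the plane; (2) the ★-graph contains a Vassiliev obstruct iff its expansion (as an X-graph) contains a Vassiliev obstruct; and (3) invoke Manturov's theorem on the expanded X-graph. Pieces (1) and (2) are the genuine content; (3) is a black box.

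Let me think about each piece carefully.

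Piece (3) is immediate: Manturov's theorem says an X-graph is X-planar iff it contains no pair of edgewise-disjoint cycles with exactly one crossing. Note that for an X-graph the ★-structure coincides with the X-structure (4-valent case), so "Vassiliev obstruct" for the expansion is exactly the hypothesis of Manturov's theorem. Good.

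Piece (1): Given a ★-embedding of the ★-graph G in the plane, I want to produce an X-embedding of the expansion G'. At each 6-vertex a, the ★-structure gives a cyclic order (A,B,C,D,E,F) of the six half-edges. In the plane the embedding realizes these six half-edges in this cyclic (or reversed) order around the vertex. The expansion replaces the 6-vertex by a small "triangle" gadget of three 4-vertices (the two pictured arrangements, which are mirror images). I need to check that I can insert the appropriate gadget locally into the disk around a so that the resulting graph is still embedded and that the induced X-structures at the three new 4-vertices match the prescribed expansion X-structures. This is a purely local statement: the gadget is planar and has the six outer legs appearing in the correct cyclic order. Conversely, given an X-embedding of G', I want to collapse each gadget back to a 6-vertex and obtain a ★-embedding of G; collapsing a small planar region to a point is fine, and I must verify the adjacency constraint in the definition of ★-embedding — that cyclically-adjacent half-edges lie in a common face. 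The key local fact is that the gadget, when X-embedded, forces exactly the cyclic adjacency structure of the hexagon. This needs to be checked for the gadget, and here one uses that the gadget's three crossings are passed along the X-structure so its planar embeddings all present the legs in the hexagonal cyclic order.

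Piece (2), the correspondence of Vassiliev obstructs, is where I expect the main obstacle. A cycle in G passing through a 6-vertex a enters along one half-edge and leaves along another; in the expansion this path gets routed through the gadget. The subtlety is that at a 6-vertex the ★-structure distinguishes three kinds of pairs of half-edges (opposite, adjacent, distance-2), and "crossing" at a is defined by the alternation condition in C_6. I must show that a crossing at the 6-vertex in G corresponds, after expansion, to a crossing at exactly one of the three gadget 4-vertices (and that the non-crossing passages through a produce no spurious crossing, i.e. the two cycles just make turns in the gadget). Equally, cycles in G' must be pulled back to cycles in G without creating or destroying crossings, and edgewise-disjointness must be preserved in both directions. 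The heart of the matter is a careful case analysis of how two cycles can traverse the hexagon at a common vertex — which pairs of the six half-edges each cycle uses — and matching each case against the geometry of the gadget to confirm that the alternation/crossing count is preserved. This combinatorial bookkeeping, ensuring the obstruct count is exactly preserved under expansion in both directions, is the crux; the embedding-transfer arguments of Piece (1) are comparatively routine local surgery.
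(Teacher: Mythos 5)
Your overall strategy---expand each 6-vertex into a triangle of 4-valent crossings and reduce to Manturov's theorem---is exactly the paper's. However, two of your three pieces are stated as equivalences, and only one direction of each is needed (or safe). The paper's logic is: a Vassiliev obstruct in $G$ obstructs \st-planarity directly (trivial); if $G$ is not \st-planar then $G'$ is not $X$-planar (the contrapositive of: an $X$-embedding of $G'$ yields a \st-embedding of $G$); Manturov then gives an obstruct in $G'$; and an obstruct in $G'$ yields one in $G$. The converse directions you plan to prove are never used, and the direction ``obstruct in $G$ implies obstruct in $G'$'' is genuinely troublesome: two edge-disjoint cycles of $G$ meeting at a 6-vertex must both be routed through the three edges of the gadget triangle and may be forced to share one, so edge-disjointness need not survive expansion. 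Also, in the direction of piece (1) that is actually needed, the triangle of an arbitrary $X$-embedding of $G'$ need not bound an empty disk---other parts of $G'$ can be embedded inside it---so you cannot simply ``collapse a small planar region''; the paper first passes to the sphere and flips the interior of any triangle having one leg-pair pointing inward and two outward, normalizing every gadget to bound an empty face before collapsing it to a 6-vertex.

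The more substantive gap is in your account of the crux, piece (2). You propose to verify that the crossing count is \emph{exactly} preserved and that pulling cycles of $G'$ back to $G$ neither creates nor destroys crossings. That is false: collapsing a gadget can destroy crossings (a gadget 4-vertex where two strands cross may correspond to two passages of the 6-vertex whose half-edges do not alternate in $C_6$, i.e.\ a mere turn), and can destroy entire cycles (the small triangle itself may be one of the cycles). The paper's argument has two ingredients your plan lacks. First, since all degrees are even, the two cycles of the obstruct are completed to a partition of \emph{all} edges of $G'$ into cycles; this reduces the local picture at each gadget to exactly ten configurations, classified by whether each gadget vertex is ``open'', ``closed'', or ``crossing''. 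Second, in the three configurations where something is lost under collapse, one argues the loss is harmless: a lost cycle carries no crossings, a lost crossing is a self-crossing, and when two crossings are lost they cannot both be the unique crossing between the two distinguished cycles. Without the partition trick and this ``harmless loss'' analysis, the case check you gesture at does not close, because exact preservation---the property you intend to verify---simply does not hold.
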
 

\begin{proof}
Obviously no \st-graph containing a Vassiliev obstruct is \st-planar. It is therefore sufficient to show that any non-\st-planar \st-graph with all vertices of order 4 or 6 contains a Vassiliev obstruct. Suppose $G$ is such a graph, and let $G'$ be its expansion. Then by Lemma \ref{le1}, $G'$ is non-$X$-planar. By Manturov's theorem \cite{Man,Sko}, $G'$ contains a Vassiliev obstruct. By Lemma \ref{le2}, $G$ contains a Vassiliev obstruct.
\end{proof}

\begin{rem}\rm
The number of crossings between two cycles in the definition of a Vassiliev obstruct should be counted with multiplicities. For example, in the following graph the vertex appears twice as
a crossing of cycle $A$ with the figure-eight cycle $(B_1,B_2)$. Thus its multiplicity is 2. Therefore the cycles do not represent a Vassiliev obstruct.
$$\risS{-20}{mult}{\put(-8,50){$A$}\put(16,55){$B_1$}
                 \put(1,2){$B_2$}}{40}{45}{20}
$$
\end{rem} 

\begin{rem}\rm
In view of the recent paper \cite{Ad} 4-and 6-valent \st-graphs might be useful in knot theory as well. 
\end{rem} 

\section{Auxiliary lemmas} \label{s:al}

\begin{lemma}\label{le1}
Let $G$ be a \st-graph in which all vertices have order 4 or 6, and let $G'$ be the expansion of $G$ considered as an $X$-graph. Then if $G'$ has an $X$-embedding into the plane, $G$ also has a \st-embedding into the plane.
\end{lemma}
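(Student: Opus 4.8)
The plan is to show that a \st-embedding of the expansion $G'$ can be pushed down to a \st-embedding of the original graph $G$ by collapsing each expanded cluster of three 4-vertices back into a single 6-vertex. The key observation is that the expansion operation is a purely local modification: it replaces a small disk neighborhood of each 6-vertex with a disk containing the three 4-vertices and the internal edges joining them, while leaving the six external half-edges and the rest of the graph untouched. So first I would fix an $X$-embedding of $G'$ into the plane, and then for each 6-vertex $a$ of $G$ examine the image of the corresponding three-4-vertex gadget inside a small disk $D_a$ of the plane, arranging (by an isotopy) that the disks $D_a$ are pairwise disjoint and that each external half-edge crosses $\partial D_a$ exactly once.

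The heart of the argument is the local claim that inside $D_a$ the six external half-edges meet $\partial D_a$ in the correct cyclic order $(A,B,C,D,E,F)$ prescribed by the \st-structure at $a$. This is where the two specific arrangements in the definition of the expansion do the work: each of the two permitted gadgets is a planar configuration of three crossings whose only planar (i.e. $X$-respecting) embeddings present the six loose ends on the boundary circle in the hexagonal order matching the \st-structure, up to the allowed reversal. So first I would enumerate, for one of the two gadget types, the ways the three 4-vertices can be $X$-embedded in the disk; because each 4-vertex must be embedded respecting its $X$-structure and the internal edges must be embedded without crossings, the cyclic order in which the six external ends emanate is forced. Having verified this, I would then collapse $D_a$ to a point, sending the three 4-vertices and their connecting edges to a single 6-vertex $a$, and connect the six external half-edges radially. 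Since the external ends already appear on $\partial D_a$ in the order $(A,B,C,D,E,F)$, the resulting local picture at $a$ is exactly a \st-embedding of the 6-vertex, i.e. adjacent half-edges in the cycle $C_6$ lie in a common face.

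Carrying this out for every 6-vertex simultaneously, and leaving each 4-vertex of $G$ (which already sits inside $G'$ unchanged) alone, produces an embedding of $G$ into the plane. It remains only to check that the \st-constraint holds at every vertex: at the 4-vertices this is immediate since the embedding and the $X$-structure are inherited directly from $G'$, and at the 6-vertices it follows from the boundary-order computation just described. The collapse does not introduce any new crossings because it takes place inside pairwise disjoint disks whose interiors meet the rest of the graph only along the radial external edges.

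I expect the main obstacle to be the local enumeration step, namely verifying that every planar $X$-embedding of the three-4-vertex gadget really does force the hexagonal cyclic order on the boundary, rather than merely allowing it. One must rule out degenerate placements in which, for instance, an internal edge is routed the ``long way'' around and permutes the external ends into a non-hexagonal order; handling this cleanly requires tracking the faces of the gadget and using the planarity of its $X$-embedding (equivalently, that the gadget together with its boundary circle is a planar graph whose rotation at the boundary is determined). Once the boundary cyclic order is pinned down, the collapse and the global assembly are routine, so the entire difficulty is concentrated in this single local verification, which should be dispatched by a direct inspection of the two gadget pictures given in the definition of the expansion.
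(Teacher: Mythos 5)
Your overall strategy --- collapse each three-vertex gadget back to a 6-vertex --- is the same as the paper's, but there is a genuine gap at the very first step, where you claim to arrange ``by an isotopy'' that each gadget sits inside a disk $D_a$ meeting the rest of the graph only in six radial arcs. The three internal edges of a gadget form a closed curve (a triangle) separating the sphere into two regions, and the $X$-structure at each of its three 4-vertices only forces the \emph{pair} of external half-edges at that vertex to point into the same region (since the two triangle edges there are opposite to the two external edges, not to each other); it does not force all three pairs into the same region. So a perfectly valid $X$-embedding of $G'$ may have, say, one pair of external half-edges --- together with the entire portion of $G'$ attached to them --- trapped inside the triangle and the other two pairs outside. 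In that situation no disk $D_a$ of the kind you describe exists, and no ambient isotopy can create one: the embedding itself must be changed. This is exactly the step on which the paper spends most of its proof: it first observes that at each triangle vertex the two non-cycle edges point consistently in or out, and then performs an explicit surgery on the sphere (preserving the induced $X$-structure) that moves an inward-pointing pair and everything hanging on it to the outside, repeating until every triangle bounds an empty disk. Only after this normalization is your collapse legitimate. You flag ``degenerate placements'' as the main obstacle but misdiagnose it as a local matter to be settled by inspecting the two gadget pictures; the obstruction is not local to the gadget at all, since it concerns where the rest of $G'$ lies relative to the separating triangle.

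By contrast, the step you single out as the heart of the argument --- that the six external ends of an emptied gadget meet the boundary circle in the hexagonal order --- is the easy part: once the triangle bounds an empty disk, the $X$-condition at each of its three vertices forces the two external half-edges there to be adjacent on the boundary and separated from the other pairs by the triangle edges, and the hexagonal cyclic order follows immediately. I would restructure the proof so that the normalization (emptying one side of each triangle while preserving the $X$-structure) is stated and justified as the key lemma, with the boundary-order check and the collapse as the routine conclusion.
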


\begin{proof} Suppose $G'$ has an X-embedding in the plane. 
In a usual way, instead of speaking of embeddings into the plane we can speak about embeddings into a sphere.
Then for any 6-vertex in $a\in G$, consider the corresponding cycle of length 3 in $G'$. In the embedding of $G'$, this cycle is a closed loop dividing the sphere into an inside and an outside regions. At each of the three vertices on the loop, the two edges which are not part of the cycle must either both be pointing inward or both be pointing outward. (If one points inward and the other outward, it is not an X-embedding.) Suppose that there is one pair pointing in and two pairs pointing out. Then make the following transformation on the embedding of $G'$, which does not change the $X$-structure.
%
$$\risS{-20}{le1-1}{}{80}{0}{0}
\quad\risS{25}{tor}{}{30}{0}{0}\quad
\risS{-20}{le1-2}{}{80}{100}{30}
$$

Repeat this process for every expanded 6-vertex, producing an embedding of $G'$ in which every expanded 6-vertex either has nothing inside or nothing outside of it. Then to each such cycle, apply the transformation
$$\risS{-20}{le1-3}{}{80}{0}{0}
\quad\risS{15}{tor}{}{30}{0}{0}\quad
\risS{0}{le1-4}{}{40}{50}{20}
$$
producing a \st-embedding of $G$ into the sphere, and thus into the plane as well.
\end{proof}

\begin{lemma}\label{le2}
Let $G$ be a \st-graph in which all vertices have order 4 or 6, and let $G'$ be the expansion of $G$. Then if $G'$ has a Vassiliev obstruct, $G$ also has a Vassiliev obstruct.
\end{lemma}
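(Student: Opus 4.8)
The plan is to contract each expanded triangle in $G'$ back to the corresponding $6$-vertex of $G$, and to track the given Vassiliev obstruct under this contraction. Let $(A',B')$ be a pair of edgewise disjoint cycles in $G'$ with exactly one crossing. Collapsing every triangle that was inserted in place of a $6$-vertex sends $A'$ and $B'$ to closed walks $A$ and $B$ in $G$: each maximal subpath of $A'$ running inside a single triangle (necessarily entering and leaving through external half-edges) becomes a single passage of $A$ through the corresponding $6$-vertex, while every other edge of $A'$ is an external edge identified with an edge of $G$. I would first record the bookkeeping: the external edges of $G'$ are in bijection with the edges of $G$, and the three internal edges of each triangle are mutually distinct, so edgewise disjointness of $A'$ and $B'$ passes to edgewise disjointness of $A$ and $B$.

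The heart of the argument is a local count at each vertex of $G$ showing that the total number of crossings, counted with multiplicity, is preserved. At an original $4$-vertex nothing changes: the incident half-edges and the $X$-structure are untouched, so $A$ and $B$ cross there exactly when $A'$ and $B'$ do. At a $6$-vertex $a$ I would model the \st-structure as a disk with the six half-edges $1,\dots,6$ placed on the boundary in the cyclic order of $C_6$, so that each passage of a cycle through $a$ becomes a chord joining two boundary points. The expansion, in either of its two forms, embeds the triangle as a planar graph inside this disk, realizing each chord as an arc running along the triangle. Since the three internal edges bound the central region and do not cross one another, and since $A'$ and $B'$ are edgewise disjoint, any intersection of two such arcs occurs at a shared triangle vertex and is exactly a crossing in the $X$-sense there. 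By the Jordan curve theorem the two arcs meet transversally precisely when their four boundary endpoints alternate around $C_6$, and in this planar model that number is $0$ or $1$; alternation of the endpoints is by definition exactly the condition that $A$ and $B$ cross at $a$ in the $C_6$-sense. Hence the number of $X$-crossings of $(A',B')$ among the three triangle vertices of $a$ equals the number of crossings of $(A,B)$ at $a$, and the same identity holds chord by chord if a cycle visits $a$ several times.

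Summing this local identity over all vertices of $G$, equivalently over all vertices of $G'$, shows that the total crossing number of $(A,B)$, counted with the multiplicities described in the Remark, equals the total crossing number of $(A',B')$, which is $1$. Thus $A$ and $B$ are edgewise disjoint cycles in $G$ with exactly one crossing, i.e. a Vassiliev obstruct. The step I expect to be the main obstacle is the local analysis at a $6$-vertex: one must check, uniformly for both expansion patterns and for every routing of two chords through the triangle, that the planar crossing count really matches alternation in $C_6$ and that no spurious intersection is created in the interior of a triangle edge. A secondary point requiring care is that the contracted walks $A$ and $B$ need not be simple, so the argument must be phrased throughout in terms of passages and crossings counted with multiplicity rather than simple cycles.
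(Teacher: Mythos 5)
Your overall strategy (contract each triangle and track the two obstruct cycles) is viable, but the central local claim is false as stated, and this is a genuine gap. You assert that at a $6$-vertex the number of $X$-crossings of the two passages among the three triangle vertices \emph{equals} the number of crossings of the contracted passages at the $6$-vertex, justified by saying that two arcs in the disk meet transversally $0$ or $1$ times. That is not so for arcs forced to run along the triangle: they can meet transversally at \emph{two} of the three triangle vertices. Concretely, label the external half-edges $A,B,C,D,E,F$ in cyclic order, with $(A,B)$ at $v_1$, $(C,D)$ at $v_2$, $(E,F)$ at $v_3$. Take one passage $D\to E$ routed $D\to e_{21}\to v_1\to e_{13}\to E$ (turning at $v_1$ through the two internal half-edges) and another passage $C\to F$ routed through the internal edge $e_{23}$. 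These are edge-disjoint, and they cross in the $X$-sense at both $v_2$ and $v_3$; yet after contraction the chords $(D,E)$ and $(C,F)$ do not alternate in $C_6$, so the crossing count drops from $2$ to $0$. This is exactly the configuration the paper isolates as its case ``two crossing, one closed,'' where it explicitly records that two crossings are \emph{lost}. So your summation step, which needs exact equality at every vertex, does not go through.

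The correct local statement, which your Jordan-curve reasoning actually proves, is a mod $2$ identity per pair of passages: the number of transversal meetings of two boundary-to-boundary arcs in the disk has the \emph{parity} of the alternation of their endpoints. To repair the proof you need one more observation: since two contracted passages cross at most once at a single $6$-vertex, the contracted count for each passage pair is $c'\bmod 2\le c'$, where $c'$ is the count in $G'$; summing over all passage pairs and all vertices, the total crossing number of $(A,B)$ is both odd and at most the total for $(A',B')$, which is $1$, hence equals $1$. (You should also dispose of the degenerate case where an obstruct cycle is a bare triangle and contracts to nothing; such a cycle crosses nothing, so it cannot occur.) With that patch your argument is a clean, computation-free alternative to the paper's proof, which instead extends the two cycles to a partition of all edges into cycles, classifies each triangle vertex as open, closed, or crossing, and checks ten explicit local replacement pictures, observing in each that crossings are either preserved, or lost as self-crossings, or lost in pairs between the same two cycles.
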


\begin{proof} Since all of the vertices in $G$ and $G'$ have even order, the edges making up the complement to a Vassiliev obstruct can be partitioned into edgewise disjoint cycles. Thus the Vassiliev obstruct can be redefined as a partition of the edges of the graph into cycles, two of which have exactly one crossing. Note that such a partition assigns to each 4-vertex one of three possible structures:
$$\risS{-20}{st1}{}{40}{0}{0}\hspace{2cm}
\risS{-20}{st2}{}{40}{0}{0}\hspace{2cm}
\risS{-20}{st3}{}{40}{25}{25}
$$
depending on the passage of the cycles though the vertex.

\medskip
For such a structure on $G'$, call a 4-vertex in an expanded 6-vertex ``closed'' if it pairs together the edges joining it to the other two vertices in the expanded 6-vertex, ``crossing'' if it pairs together opposite edges, and ``open'' otherwise: 
$$\risS{-28}{v4-3}{\put(-20,-12){\tt Expanded 6-vertex}}{50}{33}{47}
\hspace{5cm}
\risS{-28}{cr-op-cl}{\put(-80,-12){\tt Its cycle structure. Case (10)    
                                       below.}
                     \put(31,37){\tt crossing}\put(32,22.5){\tt closed}
                     \put(-10,32.5){\tt open}}{50}{33}{47}
$$

An expanded 6-vertex must therefore have one of the following structures:
\begin{enumerate}
\item{all vertices open}
\item{all vertices closed}
\item{all vertices crossing}
\item{two open, one closed}
\item{two open, one crossing}
\item{two closed, one open}
\item{two closed, one crossing}
\item{two crossing, one open}
\item{two crossing, one closed}
\item{one crossing, one open, one closed}
\end{enumerate}

To get a Vassiliev obstruct in $G$ from a Vassiliev obstruct in $G'$, perform the following transformation to the cycle structure of each expanded 6-vertex, splitting into cases according to the 10 possibilities previously listed:
\begin{enumerate}
\item{all vertices open}
$$\risS{-15}{op-op-op-1}{}{50}{0}{0}
\qquad\risS{15}{tor}{}{30}{0}{0}\qquad
\risS{-15}{op-op-op-2}{}{50}{30}{10}
$$
\item{all vertices closed}
$$\risS{-15}{cl-cl-cl-1}{}{50}{0}{0}
\qquad\risS{15}{tor}{}{30}{0}{0}\qquad
\risS{-15}{cl-cl-cl-2}{}{50}{30}{10}
$$
\item{all vertices crossing}
$$\risS{-15}{cr-cr-cr-1}{}{50}{0}{0}
\qquad\risS{15}{tor}{}{30}{0}{0}\qquad
\risS{-15}{cr-cr-cr-2}{}{50}{30}{10}
$$
\item{two open, one closed}
$$\risS{-15}{op-op-cl-1}{}{50}{0}{0}
\qquad\risS{15}{tor}{}{30}{0}{0}\qquad
\risS{-15}{op-op-cl-2}{}{50}{30}{10}
$$
\item{two open, one crossing}
$$\risS{-15}{op-op-cr-1}{}{50}{0}{0}
\qquad\risS{15}{tor}{}{30}{0}{0}\qquad
\risS{-15}{op-op-cr-2}{}{50}{30}{10}
$$
\item{two closed, one open}
$$\risS{-15}{cl-cl-op-1}{}{50}{0}{0}
\qquad\risS{15}{tor}{}{30}{0}{0}\qquad
\risS{-15}{cl-cl-cl-2}{}{50}{30}{10}
$$
\item{two closed, one crossing}
$$\risS{-15}{cl-cl-cr-1}{}{50}{0}{0}
\qquad\risS{15}{tor}{}{30}{0}{0}\qquad
\risS{-15}{cl-cl-cl-2}{}{50}{30}{10}
$$
\item{two crossing, one open}
$$\risS{-15}{cr-cr-op-1}{}{50}{0}{0}
\qquad\risS{15}{tor}{}{30}{0}{0}\qquad
\risS{-15}{cr-cr-op-2}{}{50}{30}{10}
$$
\item{two crossing, one closed}
$$\risS{-15}{cr-cr-cl-1}{}{50}{0}{0}
\qquad\risS{15}{tor}{}{30}{0}{0}\qquad
\risS{-15}{op-op-cl-2}{}{50}{30}{10}
$$
\item{one crossing, one open, one closed}
$$\risS{-15}{cr-op-cl}{}{50}{0}{0}
\qquad\risS{15}{tor}{}{30}{0}{0}\qquad
\risS{-15}{cr-op-cl-2}{}{50}{45}{15}
$$
\end{enumerate}
The result of this transformation is a partition of edges of the graph $G$ into cycles, as depicted in the figures on the right-hand side. We claim that among these cycles of $G$ there are two with one crossing.
Indeed in cases 1, 3, 4, 5, 6, 8, and 10, the transformation preserves the cycle segments and crossings. In case 2, a cycle is lost, but it has no crossings so it cannot be one of the two cycles with one crossing. In case 7, a crossing is lost, but it is a self-crossing so it does not affect the presence of a Vassiliev obstruct. In case 9, two crossings are lost, but since there are two of them they cannot be between the two cycles with one crossing. Thus a Vassiliev obstruct in $G'$ gives a Vassiliev obstruct in $G$.
\end{proof}

\bigskip\bigskip


\end{document}